\newtheorem{theorem}{Theorem}
\newtheorem{proposition}[theorem]{Proposition}
\newtheorem{lemma}[theorem]{Lemma}
\newtheorem{corollary}[theorem]{Corollary}
\newtheorem{conjecture}[theorem]{Conjecture}
\theoremstyle{definition}
\newtheorem{defn}[theorem]{Definition}
\newtheorem*{recap}{Theorem~\ref{THE theorem}}
\newcommand{\fix}{\operatorname{fix}}
\begin{document}

\title[Hurwitz Action on Factorizations in $G_6$]{Hurwitz Actions on Reflection Factorizations in Complex Reflection Group $G_6$}
\author[G. Gawankar]{Gaurav Gawankar$^1$}
\author[D. Lazreq]{Dounia Lazreq$^{1, *}$}
\author[M. Rai]{Mehr Rai$^1$}
\author[S. Sabar]{Seth Sabar$^2$}
\thanks{$^1$ George Washington University, Washington, DC, USA 20052}
\thanks{$^2$ School Without Walls, Washington, DC, USA 20052}
\thanks{$^*$ Corresponding author: \href{mailto:lazreqdounia@gwu.edu}{lazreqdounia@gwu.edu}}

\date{\today}

\maketitle

\begin{abstract}
We show that in the complex reflection group $G_6$, reflection factorizations of a Coxeter element that have the same length and multiset of conjugacy classes are in the same Hurwitz orbit. This confirms one case of a conjecture of Lewis and Reiner.
\end{abstract}

\section{Introduction}

The main result of this paper is the following theorem.

\begin{theorem}\label{THE theorem}
Let $T$ and $T'$ be two length-$n$ reflection factorizations of a Coxeter element of the complex reflection group $G_6$. Then, $T$ and $T'$ are in the same Hurwitz orbit if and only if they have the same multiset of conjugacy classes.
\end{theorem}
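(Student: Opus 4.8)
The forward (``only if'') direction is immediate and I would dispose of it first: a Hurwitz move sends $(t_i,t_{i+1})$ to $(t_{i+1},\,t_{i+1}^{-1}t_it_{i+1})$, and since $t_{i+1}^{-1}t_it_{i+1}$ is conjugate to $t_i$, the multiset of conjugacy classes of the factors is unchanged under every generator of the braid group. Hence it is a Hurwitz invariant, and the real content is the converse: factorizations of equal length with the same conjugacy-class multiset are Hurwitz equivalent.

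For the converse I would first assemble the concrete data of $G_6$. It is an irreducible, well-generated, rank-$2$ group of order $48$ with degrees $4$ and $12$; I would list its reflections and sort them into conjugacy classes, expecting one class of order-$2$ reflections together with the order-$3$ reflections split by the value of $\det$ into a $\zeta$-class and a $\zeta^{2}$-class (with $\zeta=e^{2\pi i/3}$). Fixing a Coxeter element $c$, the multiset of conjugacy classes of a length-$n$ factorization is recorded by a triple $(a,b_1,b_2)$ of multiplicities with $a+b_1+b_2=n$. Applying the homomorphism $\det\colon G_6\to\CC^\times$ to the identity $t_1\cdots t_n=c$ yields constraints of the form $a\equiv a_0\pmod 2$ and $b_1+2b_2\equiv e_0\pmod 3$, with $a_0,e_0$ read off from $\det(c)$; these pin down exactly which triples are realized at each length, and in particular force a unique admissible multiset in the reduced case $n=2$.

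With the admissible multisets listed, I would prove transitivity within each by induction on $n$. The base case $n=2$ is the transitivity of the Hurwitz action on reduced reflection factorizations of a Coxeter element, which holds for every well-generated complex reflection group by Bessis's work on the dual braid monoid; since the conjugacy-class multiset is an invariant, the single reduced multiset is realized by a single orbit. For the inductive step I would show that any length-$n$ factorization ($n>2$) of $c$ can be driven by Hurwitz moves into a normal form consisting of a fixed reduced factorization of $c$ followed by the remaining $n-2$ reflections sorted by conjugacy class, the sorting being carried out by a finite repertoire of ``local'' moves: swaps that reorder two adjacent factors lying in a common class, and standardizing moves inside the rank-$\le 2$ reflection subgroup generated by two adjacent factors. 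Two factorizations with equal multiset would then reach the same normal form and hence lie in one orbit.

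The main obstacle is exactly this normalization step. Because $G_6$ has reflections of order $3$, a product of two adjacent factors can be a reflection of a different order, and a reflection is not its own inverse, so one must simultaneously sort by class and reconcile a factor with its inverse while never leaving the fixed orbit; the determinant congruences control which reconciliations are needed but do not by themselves supply the moves. I expect to certify the finitely many atomic configurations --- short factorizations and two-generator sub-factorizations --- by a direct, computer-assisted search through $G_6$, and then to check that these atomic moves suffice to seed the induction for all $n$. Verifying that the sorting terminates and that no admissible multiset splits into two distinct orbits is the crux that the remainder of the argument must carry out in detail.
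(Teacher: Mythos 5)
Your forward direction and your invariant analysis are exactly right and match the paper: the determinant homomorphism applied to $t_1\cdots t_n = c$ gives $z \equiv 1 \pmod 2$ for the number of order-$2$ factors and a mod-$3$ congruence on the counts from the two order-$3$ classes, and for $n=2$ this does pin down the unique multiset, which Bessis's transitivity result then covers. The gap is in your inductive step, which you yourself flag as the unresolved crux: your proposed normal form --- a fixed reduced factorization of $c$ followed by the remaining $n-2$ reflections ``sorted by conjugacy class'' --- is not well-defined, because sorting by class does not determine the actual reflections. The leftover $n-2$ factors multiply to the identity, and a class-sorted tail such as $(r, r^{-1})$ can be built from any of the four reflections of $\mathcal{R}_1$; without a mechanism that standardizes \emph{representatives} (not just classes), two factorizations with equal multisets could land in distinct ``sorted'' tuples and you cannot conclude they share an orbit. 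Your proposed repertoire of local moves (adjacent swaps and two-generator standardizations) is not shown to achieve this, and it is precisely here that the paper does all of its work.

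The paper's mechanism, which your sketch is missing, is contraction and re-expansion of \emph{perfect tuples} via marked factorizations: whenever at least $3$ factors lie in $\mathcal{R}_1 \cup \mathcal{R}_2$ one can produce an adjacent pair $(t,t)$ of equal order-$3$ reflections (using the finite two-element Hurwitz orbits of mixed pairs plus pigeonhole), and whenever at least $7$ factors lie in $\mathcal{S}$ one can produce a triple $(t,t,t)$ (this requires analyzing the sub-conjugacy classes of $\mathcal{S}$ inside the subgroup it generates, isomorphic to $G(4,2,2)$, since equal order-$2$ reflections multiply to the identity and a pair cannot be contracted). The pair or triple is replaced by the single reflection $t^2$ or $t$, the inductive hypothesis is applied at length $\ell-1$ or $\ell-2$ to reach a standard factorization $[n,m,k']$, and a lemma on marked Hurwitz moves transports the expansion back. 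The determinant congruences guarantee one of the two thresholds is met once the length is at least $8$, which is why the paper's computer-verified base cases run through length $7$ --- your plan of a single base case $n=2$ is insufficient for any induction of this shape, since at small lengths neither perfect structure is guaranteed to exist. So while your skeleton (invariants, induction, computer-assisted atomic checks) parallels the paper, the argument as proposed does not close: the normalization you defer is not a routine verification but the theorem's actual content.
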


Theorem \ref{THE theorem} is a particular case of the following conjecture of Lewis and Reiner.

\begin{conjecture}[{\cite[Conj. 6.3]{LR}}]\label{L and R}
In a well-generated finite complex reflection group, two
reflection factorizations of a Coxeter element lie in the same Hurwitz orbit
if and only if they share the same multiset of conjugacy classes.
\end{conjecture}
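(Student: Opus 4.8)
The plan is to prove the nontrivial (``if'') direction; the converse is immediate, since each Hurwitz generator sends $(\dots,t_i,t_{i+1},\dots)$ to $(\dots, t_it_{i+1}t_i^{-1}, t_i, \dots)$, so both the product $t_1\cdots t_\ell$ and the multiset $\{[t_1],\dots,[t_\ell]\}$ of conjugacy classes are invariant along a Hurwitz orbit. I would organize the forward implication around two nested inductions: an outer induction on the rank $n$ of the well-generated group $W$, and, for fixed $W$, an inner induction on the excess length $\ell - n$. The guiding principle is that any Hurwitz move on an adjacent pair $t_i, t_{i+1}$ takes place inside the rank-$\le 2$ reflection subgroup they generate, so that the combinatorics can be \emph{localized} to rank-$2$ parabolic subgroups, which by Steinberg's theorem are themselves complex reflection groups.

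First I would record that every reflection factorization of a Coxeter element $c$ has full support: since $c$ has reflection length $n$ and reflection length cannot decrease in a subgroup, the reflections occurring must generate a rank-$n$ reflection subgroup, which for a Coxeter element forces the whole of $W$; there is therefore no genuine proper-parabolic case to split off. The base of the inner induction, $\ell = n$, is the transitivity of the Hurwitz action on \emph{reduced} reflection factorizations of $c$. These are indexed by maximal chains in the noncrossing partition lattice $NC(W,c)$, and their Hurwitz-transitivity is known for all well-generated $W$ through Bessis's dual braid monoid; this furnishes a single orbit when $\ell = n$ and initializes the induction.

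For $\ell > n$ the plan is to put an arbitrary full-support factorization into a normal form ``(a reduced factorization of $c$) followed by $\ell-n$ excess reflections,'' and then to show that the excess block is determined up to Hurwitz equivalence by its conjugacy-class multiset. The engine is a local standardization lemma: any two excess reflections whose generated subgroup has rank $\le 2$ may be permuted, conjugated, and have their class labels redistributed in every way compatible with the multiset, and each such move is certified by the rank-$2$ instance of the conjecture. Thus the outer induction reduces everything to settling the conjecture for all well-generated rank-$\le 2$ groups --- the dihedral groups together with the primitive groups $G_4,\dots,G_{22}$ --- and the inner induction propagates these local certificates through factorizations of arbitrary length in higher rank. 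Theorem~\ref{THE theorem} carries out the rank-$2$ computation for $G_6$; the remaining rank-$2$ groups are finite objects whose Hurwitz graphs can be enumerated and matched against conjugacy-class multisets in the same explicit way, while the infinite well-generated families $G(e,1,n)$ and $G(e,e,n)$ would instead be handled by combinatorial normal forms generalizing the colored-permutation models available in types $A$, $B$, and $D$.

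The main obstacle is the higher-rank propagation step, and it is aggravated by two phenomena absent from real reflection groups. First, reflections may have order greater than $2$, so a reflection $s$ and its inverse $s^{-1}$ --- which typically both surface among the excess factors --- can lie in \emph{different} conjugacy classes; the normal form must therefore track class labels exactly, and the rank-$2$ base cases must certify precisely which relabelings of an excess block are realizable by Hurwitz moves. Second, distinct classes of reflections need not be interchangeable, so one cannot treat all reflections as indistinguishable tokens. Showing that redundancy can always be confined to rank-$2$ windows, and that the permissible relabelings there assemble into a single global statement uniformly across the exceptional groups and both infinite families --- rather than through an unmanageable case analysis --- is where I expect the principal difficulty to lie.
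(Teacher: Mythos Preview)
The statement you are attempting to prove is Conjecture~\ref{L and R}, and the paper does \emph{not} prove it: it is presented precisely as an open conjecture. The paper's contribution is Theorem~\ref{THE theorem}, which settles only the single case $W=G_6$. As the introduction notes, the conjecture was previously known for real reflection groups, for $G_4$ and $G_5$, and for the infinite families; it remains open for the other well-generated exceptional groups. There is therefore no ``paper's own proof'' to compare against, and what you have written is a research programme rather than a proof.

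Two concrete gaps make the proposal incomplete even as a programme. First, the rank-$2$ base cases are not all in hand. Among the well-generated rank-$2$ primitives, the literature you can cite covers only $G_4$, $G_5$, and $G_6$; the groups $G_8$, $G_9$, $G_{10}$, $G_{14}$, $G_{16}$, $G_{17}$, $G_{18}$, $G_{20}$, $G_{21}$ are untreated. Your sentence that these ``are finite objects whose Hurwitz graphs can be enumerated'' is an assertion, not an argument: factorization length is unbounded, so a naive enumeration does not terminate, and one needs for each group an inductive scheme of the kind the paper builds for $G_6$ (find a perfect tuple, contract to a marked shorter factorization, invoke the inductive hypothesis, re-expand).

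Second, and more seriously, the ``higher-rank propagation step'' you yourself flag as the main obstacle is essentially the whole content of the conjecture beyond rank~$2$, and you have supplied no argument for it. The assertion that an arbitrary length-$\ell$ factorization can be Hurwitz-moved into the shape ``(reduced word for $c$) followed by (excess block),'' and that the excess block can then be normalized by purely rank-$2$ local moves, is exactly what Lewis--Reiner establish in the real case using circuits in root systems; that geometry has no known analogue for the non-real well-generated groups. Without such a reduction lemma, the outer induction on rank never starts, and the proposal remains a heuristic outline rather than a proof.
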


This conjecture arose as a result of generalizing a theorem by Bessis {\cite[Prop. ~1.6.1]{Bessis}}, which is identical to Conjecture \ref{L and R} , but only makes the claim for shortest reflection factorizations of a Coxeter element. Conjecture \ref{L and R} was proven for real reflection groups by Lewis and Reiner \cite{LR}, for the groups $G_4$ and $G_5$ by Peterson \cite{Zach}, and for the infinite families of complex reflection groups by Lewis \cite{infinite}. 

As in the proofs of Peterson and Lewis--Reiner, we prove Theorem \ref{THE theorem} by induction. We begin in Section \ref{section2} by giving background information and defining important objects used. We first look at complex reflection groups as a whole and then consider the intricacies of $G_6$. In Section 3, we give the proof. We start by making some key observations about the outcomes of applying Hurwtiz moves and then begin constructing our inductive argument by using the idea of a marked element to move from one reflection factorization to one of a shorter length. By checking finite instances on Sage \cite{Sage}, we prove our base cases, and are able to fully construct our inductive argument, giving us a proof of Theorem~\ref{THE theorem}.

\subsection*{Acknowledgements}
We would like to thank Joel Lewis for his continued mentoring and support. Dounia Lazreq would also like to thank the Luther Rice Undergraduate Research Fellowship for supporting this project. 

\section{General Background}\label{section2}

\subsection{Complex Reflection Groups}

Let $V$ be a vector space over the field $\mathbb{C}$. Given a linear transformation $t:V\rightarrow V$, there are several subspaces of $V$ that can be found by considering how $t$ acts on $V$. One such subspace is the \textit{fixed space} of vectors that are unchanged when $t$ is applied, denoted by $\fix(t) = \{v\in V: t(v) = v\}$. We define $t$ to be a \textit{generalized reflection} if $\dim(\fix(t)) = \dim(V) -1$. In this case, $\fix(t)$ is called a \textit{reflecting hyperplane} of $t$. 

The objects that we are working with in this paper are \textit{complex reflection groups}. The group $G$ is defined to be a complex reflection group (CRG)  if it is a finite group of transformations $t:V\rightarrow V$, where there is a subset $P \subset G$ of reflections of $G$ such that every element of $G$ can be produced by multiplying together elements of $P$. Choosing an appropriate basis, we can also write a CRG $G$ as a finite group of $\dim(V)\times \dim(V)$ matrices with complex entries.

If we take some element $c$ of a CRG $G$, then the tuple $(r_1, r_2, \ldots , r_n)$ of reflections in $G$ is a \textit{reflection factorization} of $c$ if $c = r_1 \cdot r_2 \cdots r_n$, where $n$ is the \textit{length} of the factorization. If the elements $x$ and $y$ are both in the group $G$, and $x = qyq^{-1}$, where $q$ is also an element of $G$, then $x$ and $y$ are \textit{conjugate} to each other in the group. Notice that this divides the elements of the group into classes. A \textit{conjugacy class} of a CRG
is a set of elements of the group such that any two elements in the same  conjugacy class are conjugate to each other. 
 
 A complex reflection group that can't be written as
 $$G\times H = \left\{\left[\begin{array}{c|c} g & 0 \\ \hline 0 & h \end{array}\right] \textrm{where } g \in G \text{ and } h \in H\right\}$$ 
  and where $G$ and $H$ are themselves complex reflection groups, is called \textit{irreducible}. There is a classification of these irreducible groups that gives us a few infinite families and many exceptional cases \cite{ST}.
 
 Our theorem statement makes a claim about type of element that appear in some of these groups, called Coxeter elements. To understand what these are, we need to understand a few key properties of complex reflection groups. 
 
Consider a CRG $G$ acting irreducibly on a vector space $V$. The \textit{rank} of $G$ is the dimension of $V$, and let $n = \dim(V)$. Choosing an appropriate basis, we can write $G$ as a group of $n\times n$ matrices.  We say that $G$ is \textit{well-generated} if $G$ is of rank $n$ and there exists a set $P$ of exactly $n$ reflections such that these $n$ reflections generate all of $G$. That is, every element of $G$ is some product of elements of $P$. 

The \textit{order} of an element $r\in G$ is the smallest positive integer $m$ such that $r^m$ is the identity. If we are given an element $t$ of $G$ where $t$ is of order $k$ and $\lambda$ is an eigenvalue of $t$, then $\lambda$ is a $k$th \textit{root of unity}. That is, $\lambda^k = 1$. 

The element $g\in G$ is said to be \textit{Springer regular} if it has an eigenvector $v$ that does not lie in any of the reflecting hyperplanes of the reflections of $G$. If $g$ furthermore has order $k$, then we say that $g$ is \textit{$k$-regular}.

If a complex reflection group of rank $n$ is well-generated, the \textit{Coxeter number h} of $G$ is the largest integer such the there exists an $h$-regular element in the group. These $h$-regular elements are called \textit{Coxeter elements}.

\subsection{Hurwitz Moves}
Given an element $C$ of the CRG $G$ and a reflection factorization $F = (r_1, \ldots, r_{i-1},\hspace{2pt} r_i,\hspace{2pt} r_{i+1}, \ldots, r_n)$ of $c$, we define a \textit{Hurwitz move} at position $i$, where $1\leq i\leq n-1$ to be the following operation:
\[\sigma_i(F) = (r_1, \ldots, r_{i-1},\hspace{12pt} r_{i+1},\hspace{12pt} r_{i+1}^{-1}\hspace{1pt} r_i\hspace{1pt} r_{i+1},\hspace{12pt}r_{i+1}, \ldots, r_n).\] 
Applying a Hurwitz move to a factorization produces a new factorization that multiplies to the same element $c$. If $r_i$ is a reflection in conjugacy class $\mathcal{K}$, then $ r_{i+1}^{-1}r_ir_{i+1}$ is also a reflection in conjugacy class $\mathcal{K}$. 

The \textit{Hurwitz orbit} of such a factorization is the set of all other distinct factorizations that can be reached by applying some number of Hurwitz moves to the original factorization. 

With these facts, we have the following proposition.

\begin{proposition}[{Peterson, {\cite[Prop.~2.2]{Zach}}}]
\label{ZP permutations}
A reflection factorization with a given multiset of conjugacy classes has in its Hurwitz orbit, factorizations with all possible permutations of those conjugacy classes.
\end{proposition}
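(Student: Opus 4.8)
The plan is to track how a single Hurwitz move affects the \emph{sequence} of conjugacy classes attached to a factorization, and then to invoke the fact that adjacent transpositions generate the full symmetric group. Given a factorization $F = (r_1, \ldots, r_n)$, I would record the associated sequence of conjugacy classes $(\mathcal{K}_1, \ldots, \mathcal{K}_n)$, where $r_j$ lies in $\mathcal{K}_j$. My first step is to compute the effect of $\sigma_i$ on this sequence. By definition,
\[
\sigma_i(F) = (r_1, \ldots, r_{i-1}, r_{i+1}, r_{i+1}^{-1} r_i r_{i+1}, r_{i+2}, \ldots, r_n),
\]
and since $r_{i+1}^{-1} r_i r_{i+1}$ is conjugate to $r_i$ it lies in $\mathcal{K}_i$, while $r_{i+1}$ lies in $\mathcal{K}_{i+1}$. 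Thus applying $\sigma_i$ replaces the class sequence $(\ldots, \mathcal{K}_i, \mathcal{K}_{i+1}, \ldots)$ by $(\ldots, \mathcal{K}_{i+1}, \mathcal{K}_i, \ldots)$; that is, $\sigma_i$ induces the adjacent transposition $(i\;\; i+1)$ on the sequence of conjugacy classes.

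With this observation in hand, the remainder is a standard group-theoretic fact. The adjacent transpositions $(1\;2), (2\;3), \ldots, (n-1\;\,n)$ generate the symmetric group $S_n$, so any permutation $\pi \in S_n$ of the positions can be written as a product of adjacent transpositions. Realizing the corresponding composite of Hurwitz moves on $F$ therefore produces a factorization in the Hurwitz orbit of $F$ whose class sequence is the $\pi$-rearrangement of $(\mathcal{K}_1, \ldots, \mathcal{K}_n)$. As $\pi$ ranges over $S_n$, the resulting class sequences exhaust every ordering of the multiset $\{\mathcal{K}_1, \ldots, \mathcal{K}_n\}$, which is exactly the claim.

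There is essentially no serious obstacle here: the content is entirely in the first step, and that step is immediate from the definition of a Hurwitz move together with the fact, noted above, that conjugation preserves conjugacy classes. The only point requiring a word of care is that the reflections themselves change under these moves, and only their conjugacy classes are controlled; hence the conclusion concerns which class \emph{orderings} are attainable, not the recovery of prescribed reflections in prescribed slots. When the multiset has repeated classes, several distinct $\pi$ yield the same ordering, but this redundancy is harmless, since we only need each ordering to be attained by at least one sequence of moves.
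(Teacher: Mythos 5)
Your proof is correct and is essentially the standard argument: a Hurwitz move $\sigma_i$ induces the adjacent transposition $(i\;\,i+1)$ on the sequence of conjugacy classes, and adjacent transpositions generate $S_n$. The paper itself gives no proof, citing Peterson instead, and your argument is precisely the one underlying that cited result, so there is nothing to add.
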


\subsection{The Group $\boldsymbol{G_6}$}

Out of the $34$ exceptional groups, $6$ of them are real reflection groups (for which the conjecture was proved in \cite{LR}), and $8$ of them are not well generated, so there are $20$ complex reflection groups for which we hope to prove Conjecture \ref{L and R}, two of which have already been proven \cite{Zach}. In this paper we are focusing on the group $G_6$. We define this group by the generators $A$ and $B$ where $A^3 = I = B^2$ and $ABABAB = BABABA$. 

We denote the complex third root of unity as $\zeta = e^{\frac{2 \pi i}{3}}$, and the complex twelfth root of unity as $\gamma = e^{\frac{2 \pi i}{12}}$. More concretely, one can take for  $A$ and $B$ the matrices 
$$A=\begin{pmatrix} 1 &   0       \\
     0 &  \zeta\end{pmatrix} \text{ and  }
B=\frac{1}{3}\begin{pmatrix} \gamma^{11}-\gamma^7 & -2\gamma^{11}-\gamma^7\\
    2\gamma^{11}+4\gamma^7 & \gamma^7-\gamma^{11} \end{pmatrix}.$$ 
 The group $G_6$ has four Coxeter elements, one of which is
$$C = AB = \frac{1}{3}\begin{pmatrix} \gamma^{11}-\gamma^7   &
    -2\gamma^{11}-\gamma^7  \\
    2\gamma^{11}-2\gamma^7 & 2\gamma^{11}+\gamma^7
  \end{pmatrix}.$$

For purposes of explicit calculations, as in the proofs of Propositions~\ref{basic facts} and~\ref{7 base case} below, we computed with the single Coxeter element $C$ mentioned here.  From \cite[Proposition 1.4]{RRS}, this suffices to prove these results for any Coxeter element: one can use an appropriate reflection automorphism to transfer the necessary statements from any Coxeter element to any other.

In our proof it is also helpful for us to consider the CRG $G_4$. This group is defined by two generators, $A'$ and $B'$ where $A'^3 = I = B'^3$ and $A'B'A' = B'A'B'$. Concretely, we can take these generators of $G_4$ to be 
$$A'= A = \begin{pmatrix} 1 &   0       \\
     0 &  \zeta\end{pmatrix} \text{ and  }
B'=\frac{1}{3}\begin{pmatrix} \zeta-\zeta^2 & 3\zeta^2\\
    -2\zeta^2 & -\zeta-2\zeta^2 \end{pmatrix}.$$

\begin{defn}
\label{sub-conjugacy class}
    Suppose that a set of reflections $X$ has a subset of reflections $Y$ that are all in the same conjugacy class $C$. If there are elements in $Y$ that are in different conjugacy classes when only considered in the subgroup generated by $Y$, then these conjugacy classes are called \textit{sub-conjugacy classes}.
\end{defn}

The following proposition gives us a list of basic facts about the complex reflection group $G_6$.

\begin{proposition}
\label{basic facts}
The following are true.

\begin{enumerate}
    \item The complex reflection group $G_6$ has $48$ elements, $14$ of which are reflections. The set of these reflections, which we denote by $\mathcal{R}$, can be split up by conjugacy class into the following three sets of reflections: 
    \begin{align*}
        \mathcal{R}_1 & = \left\{A , ABA(AB)^{-1}, BAB^{-1} , (BA)^{-1}ABA\right\} \\
        \mathcal{R}_2 & = \left\{A^{-1} , ABA^{-1}(AB)^{-1} , BA^{-1}B^{-1} , (BA)^{-1}A^{-1}BA\right\} \\
        \mathcal{S} & = \left\{B, ABA^{-1} , A^{-1}BA ,  (AB)^{-1}BAB , \right. \\ 
        & \hspace{1in} \left. BAB(BA)^{-1}, (A^{-1}BA)^{-1}BA^{-1}BA\right\}
    \end{align*}
    where $\mathcal{R} = \mathcal{R}_1 \cup \mathcal{R}_2 \cup \mathcal{S}$.
    
    \item Let $\mathcal{R}' = \mathcal{R}_1 \cup \mathcal{R}_2$. Then $\mathcal{R}'$ generates the CRG $G_4$. The set $\mathcal{S}$ generates a group isomorphic to $G(4,2,2)$, as defined in our proof of this statement. 
    
    \item  The conjugacy class $\mathcal{S}$ of $G_6$ contains three sub-conjugacy classes, $\mathcal{S}_1$, $\mathcal{S}_2$, and $\mathcal{S}_3$, where
    \begin{gather*} 
    \hspace{10pt}\mathcal{S}_1 = \{B, (A^{-1}BA)^{-1}BA^{-1}BA\},\\
    \hspace{40pt}\mathcal{S}_2 = \{ABA^{-1}, BAB(BA)^{-1}\},\text{ and }\\
    \hspace{10pt}\mathcal{S}_3 = \{A^{-1}BA, (AB)^{-1}BAB\}.
    \end{gather*}
    Each reflection in $\mathcal{S}$ only commutes with reflections in its sub-conjugacy class. 
    
    \item All elements of $\mathcal{R}'$ are of order $3$, and all elements of $\mathcal{S}$ are of order $2.$
    
    \item By the order of the elements and our choice of the element $A$, $\det(A) = \zeta$, $\det(B) = -1$, and $\det(C) = -\zeta$.

    \item Consider the pair $(x,y)$ where $x$ is a reflection of $G_6$ in conjugacy class $\mathcal{R}_1$ and $y$ is a reflection of $G_6$ in conjugacy class $\mathcal{R}_2$. If $x$ and $y$ are inverses, then applying a Hurwitz move to this pair simply commutes the two elements. However, if $x$ and $y$ are not inverses, then we have a length $4$ Hurwitz orbit 
  \[
    \hspace{35pt}(x, y) \overset{\sigma}{\to} (y, x')\overset{\sigma}{\to} (x', y') \overset{\sigma}{\to} (y', x) \overset{\sigma}{\to} (x, y),
  \]
  where two additional elements, $x'$ from conjugacy class $\mathcal{R}_1$ and $y'$ from conjugacy class $\mathcal{R}_2$, are introduced in the orbit.
  
  \item Consider the pair $(x,y)$ where $x$ is a reflection of $G_6$ in conjugacy class $\mathcal{R}_1$ or $\mathcal{R}_2$ and  $y$ is a reflection of $G_6$ in the conjugacy class $\mathcal{S}$. Then, the Hurwitz orbit would be of length $6$
  \begin{multline*}
\hspace{40pt}(x, y) \overset{\sigma}{\to} (y, x')\overset{\sigma}{\to} (x', y') \overset{\sigma}{\to} (y', x'')\\
\overset{\sigma}{\to} (x'', y'') \overset{\sigma}{\to} (y'', x) \overset{\sigma}{\to} (x, y),
\end{multline*}
  where there are four additional elements introduced in the orbit: $x'$ and $x''$ from the same conjugacy class as $x$ and $y'$ and $y''$ from the same conjugacy class as $y$.
  
\end{enumerate}
\end{proposition}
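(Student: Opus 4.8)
My plan is to treat Proposition~\ref{basic facts} as a collection of explicit computations in the faithful rank-$2$ matrix representation generated by $A$ and $B$, organized so that the combinatorial claims (6) and (7) reduce to a single structural fact about the Hurwitz action on pairs together with a short finite check.

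I would begin by enumerating the subgroup $\langle A, B\rangle$, confirming $|G_6| = 48$, and recording each non-identity element $t$ as a reflection exactly when $\dim\fix(t) = 1$, equivalently when $t$ has $1$ as an eigenvalue in this rank-$2$ representation; this produces the $14$ reflections. Grouping them under $t \mapsto g t g^{-1}$ for $g \in G_6$ gives the three classes $\mathcal{R}_1, \mathcal{R}_2, \mathcal{S}$ of part (1), and reading off eigenvalues and determinants settles parts (4) and (5) at once, since $\det(C) = \det(A)\det(B) = \zeta\cdot(-1) = -\zeta$. For part (2) I would compute $|\langle \mathcal{R}'\rangle| = 24$ and $|\langle \mathcal{S}\rangle| = 16$, identify $\langle \mathcal{R}'\rangle$ with $G_4$ by checking it is generated by elements satisfying the braid relation $A'B'A' = B'A'B'$ and $A'^3 = B'^3 = I$ with matching order, and identify $\langle \mathcal{S}\rangle$ with $G(4,2,2)$ by exhibiting it as the stated monomial group. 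Part (3) is then finite: computing every commutator $[s,s']$ for $s,s'\in\mathcal{S}$ shows that the commuting pairs are exactly $\mathcal{S}_1,\mathcal{S}_2,\mathcal{S}_3$, and one verifies these are the conjugacy classes of $\mathcal{S}$ inside $\langle\mathcal{S}\rangle$.

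The core of the argument is parts (6) and (7), which I would derive from one lemma. Writing $\sigma = \sigma_1$ and $c = xy$, the move formula gives
$$\sigma(x,y) = (y,\, y^{-1}xy), \qquad \sigma^2(x,y) = (c^{-1}xc,\, c^{-1}yc),$$
and hence $\sigma^{2k}(x,y) = (c^{-k}xc^k,\, c^{-k}yc^k)$. Since Hurwitz moves fix $c$ and send each reflection to a $G_6$-conjugate, every entry appearing in the orbit stays in the class of $x$ or of $y$, as the statements assert. Moreover $x$ and $y$ lie in different classes (the distinct classes $\mathcal{R}_1,\mathcal{R}_2$ in (6); the different orders $3$ and $2$ in (7)), so no odd power of $\sigma$ can return to $(x,y)$; the orbit length is therefore exactly $2m$, where $m$ is the least positive integer with $c^m$ centralizing $x$ (equivalently commuting with $x$, as $c^m$ always commutes with $c = xy$). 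It remains only to pin down $m$: when $x,y$ are inverses, $c = I$ gives $m=1$ and the length-$2$ orbit in which $\sigma$ merely transposes the pair; otherwise I would verify $m=2$ in the $\mathcal{R}_1$--$\mathcal{R}_2$ case and $m=3$ in the $\mathcal{R}'$--$\mathcal{S}$ case, giving lengths $4$ and $6$. The conceptual reason behind these values is that the relevant power of $c$ becomes a scalar matrix: for instance the Coxeter element $C = AB$ has eigenvalues $\gamma$ and $\gamma^9$, so it has order $12$ with $C^3 = iI$ central, which forces $m = 3$ for the pair $(A,B)$.

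The main obstacle is making the universal claim in (6) and (7) rigorous, since $m$ depends a priori on the particular pair through $c = xy$ and $x$. I would resolve this by observing that the diagonal action $(x,y)\mapsto(gxg^{-1}, gyg^{-1})$ commutes with $\sigma$ and preserves $m$, so it suffices to compute $m$ on one representative of each diagonal-conjugacy orbit of admissible pairs, a small computation I would carry out in Sage and cross-check against the scalar-power observation above. The one other point demanding care is the identification in part (2): establishing the abstract isomorphism type $G(4,2,2)$, as opposed to merely the order $16$, requires fixing a concrete presentation of $G(4,2,2)$ and writing down the isomorphism explicitly, which is where most of the routine bookkeeping concentrates.
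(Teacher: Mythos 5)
Your proposal is correct, and for the interesting parts it is genuinely more structural than what the paper does. The paper's proof handles only part (2) by hand --- exhibiting an explicit change-of-basis matrix $M$ that conjugates $B$, $ABA^{-1}$, and $A^{-1}BA$ to the monomial matrices $\begin{pmatrix}1 & 0\\ 0 & -1\end{pmatrix}$, $\begin{pmatrix}0 & 1\\ 1 & 0\end{pmatrix}$, $\begin{pmatrix}0 & -i\\ i & 0\end{pmatrix}$ generating $G(4,2,2)$ --- and disposes of all the remaining parts in one sentence, as finite checks carried out in Sage. Your treatment of parts (1)--(5) is essentially that same finite verification, just spelled out (and your insistence on fixing a concrete presentation of $G(4,2,2)$ mirrors the paper's matrix $M$). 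Where you diverge is in (6) and (7): your lemma that $\sigma^2(x,y) = (c^{-1}xc,\, c^{-1}yc)$ with $c = xy$, so that the orbit has length exactly $2m$ with $m$ the least positive integer for which $c^m$ centralizes $x$ (odd returns being excluded because $x$ and $y$ lie in different conjugacy classes, by class in case (6) and by order in case (7)), is a genuine conceptual improvement over the paper's raw enumeration: it explains \emph{why} the orbit lengths are $2$, $4$, $6$, identifies the scalar power of $c$ as the mechanism (e.g.\ $C^3 = iI$ for the pair $(A,B)$), and shrinks the machine check to computing $m$ on finitely many diagonal-conjugacy representatives. It also yields, almost for free, the distinctness of the newly introduced elements: minimality of $m$ forces $x, c^{-1}xc, c^{-2}xc^2$ pairwise distinct, a point the statement asserts and the paper only certifies by computation. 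Your one acknowledged caveat is the right one --- for an arbitrary pair in case (7) the product $xy$ need not be a Coxeter element, so the value $m = 3$ cannot be read off from $C$ alone and the per-representative check is genuinely needed --- and since you plan to perform exactly that finite verification, there is no gap.
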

\begin{proof}
First we prove (2).  The group $G(4,2,2)$ belongs to the infinite family $G(m, p, n)$ of finite complex reflection groups (named by Shephard--Todd \cite{ST}); it consists of the sixteen $2 \times 2$ monomial matrices with nonzero entries $\pm i$ and $\pm 1$ such that these nonzero entries multiply to $\pm1$.

To show that $\mathcal{S}$ generates a group isomorphic to $G(4,2,2)$, we use the change of basis matrix
$$M = \frac{1}{2}\begin{pmatrix} 2 &   -\gamma^4-\gamma^7-2\gamma^{11}       \\
     \gamma^4+\gamma^{11} &  -\gamma^4-\gamma^7 \end{pmatrix}.$$
     This conjugates $B$ to $\begin{pmatrix}1 & 0\\ 0 & -1\end{pmatrix}$, $ABA^{-1}$ to $\begin{pmatrix}0 & 1\\ 1 & 0\end{pmatrix}$ and $A^{-1}BA$ to $\begin{pmatrix}0 & -i \\ i & 0\end{pmatrix}$.  These conjugated matrices generate $G(4,2,2)$.
     
All the other parts can be observed by checking the finite instances in which they occur, which we did by calculations on Sage \cite{Sage}.
\end{proof}

\begin{lemma}
\label{ordering of classes}
A reflection factorization with a given multiset of conjugacy classes has in its Hurwitz orbit a factorization in which, when reading the reflection from left to right, first we see all the reflections from $\mathcal{R}_1$, then all the reflections from $\mathcal{R}_2$, and then all the reflections from $\mathcal{S}$.
\end{lemma}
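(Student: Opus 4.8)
The plan is to derive this lemma as an immediate consequence of Proposition~\ref{ZP permutations}. That proposition asserts that the Hurwitz orbit of any factorization contains representatives realizing \emph{every} permutation of the underlying multiset of conjugacy classes. Since, by Proposition~\ref{basic facts}(1), the reflections of $G_6$ fall into exactly the three conjugacy classes $\mathcal{R}_1$, $\mathcal{R}_2$, and $\mathcal{S}$, the order requested in the lemma---first all of $\mathcal{R}_1$, then all of $\mathcal{R}_2$, then all of $\mathcal{S}$---is simply one particular permutation of those class labels. Applying Proposition~\ref{ZP permutations} with this chosen permutation produces a factorization in the orbit with exactly the prescribed reading order, which is what the lemma demands.

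For completeness, I would recall why Proposition~\ref{ZP permutations} yields arbitrary permutations. A Hurwitz move $\sigma_i$ replaces the adjacent pair $(r_i, r_{i+1})$ by $(r_{i+1},\, r_{i+1}^{-1} r_i r_{i+1})$; because $r_{i+1}^{-1} r_i r_{i+1}$ is conjugate to $r_i$, it lies in the same conjugacy class as $r_i$. Hence, at the level of conjugacy-class labels, the move $\sigma_i$ acts exactly as the transposition swapping the labels in positions $i$ and $i+1$. Adjacent transpositions generate the full symmetric group, so by composing Hurwitz moves one can rearrange the sequence of labels into any desired order, in particular the sorted order of the lemma. This is precisely the content established in the proof of Proposition~\ref{ZP permutations}.

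Because the lemma is a direct specialization of an already-available result, I do not anticipate a genuine obstacle. The only point meriting care is bookkeeping: verifying that $\mathcal{R}_1$, $\mathcal{R}_2$, and $\mathcal{S}$ are genuinely the complete list of reflection conjugacy classes, so that sorting by class is well-defined and coincides with the three-block order named in the statement. This is guaranteed by Proposition~\ref{basic facts}(1), so the sorting argument applies without modification.
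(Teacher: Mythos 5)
Your proposal is correct and matches the paper's proof exactly: both derive the lemma as an immediate consequence of Proposition~\ref{ZP permutations}, since the desired sorted arrangement is just one particular permutation of the multiset of conjugacy-class labels. Your extra paragraph recalling why Hurwitz moves realize arbitrary permutations simply restates the content of Proposition~\ref{ZP permutations} and is harmless, though the paper leaves it implicit.
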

\begin{proof}
This follows direct from Proposition \ref{ZP permutations}, as all permutations of conjugacy classes can be reached through Hurwitz moves.
\end{proof}

\begin{proposition}
  \label{mod 3 conj classes}
  In a reflection factorization of Coxeter element $C$, let $x$ be the number of elements from $\mathcal{R}_1$, $y$ be the number of elements from $\mathcal{R}_2$, and $z$ be the number of elements from $\mathcal{S}$. If we write $x = 3x'+s$ and $y = 3y'+t$ where $x', y', s, t$ are integers such that $0\leq s,t<3$, then
  $1\equiv (s+2t)\pmod 3$ and $z \equiv 1 \pmod 2$.
\end{proposition}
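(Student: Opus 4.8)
The plan is to extract both congruences from a single determinant computation, exploiting the fact that the determinant is a class function and so is constant on each conjugacy class of reflections. First I would record the determinant of a reflection in each class. Since $\det(qrq^{-1}) = \det(r)$, Proposition~\ref{basic facts}(5) gives that every reflection in $\mathcal{R}_1$ (the conjugacy class of $A$) has determinant $\zeta$, every reflection in $\mathcal{R}_2$ (the conjugacy class of $A^{-1}$) has determinant $\zeta^{-1} = \zeta^2$, and every reflection in $\mathcal{S}$ (the conjugacy class of $B$) has determinant $-1$.

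Next I would take determinants of both sides of $C = r_1 r_2 \cdots r_n$ and collect the factors by conjugacy class, which yields
\[
\det(C) = \zeta^{x}\,(\zeta^2)^{y}\,(-1)^{z} = \zeta^{\,x+2y}\,(-1)^z.
\]
Since $\det(C) = -\zeta$, again by Proposition~\ref{basic facts}(5), this becomes the identity $\zeta^{\,x+2y}(-1)^z = (-1)^1\zeta^1$ inside the group $\mu_6$ of sixth roots of unity.

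The key step is then to decouple the order-$2$ and order-$3$ contributions. Because $\gcd(2,3)=1$, the map $(\mathbb{Z}/2)\times(\mathbb{Z}/3)\to \mu_6$ sending $(a,b)\mapsto (-1)^a\zeta^b$ is an isomorphism, so the equality $\zeta^{\,x+2y}(-1)^z = (-1)^1\zeta^1$ forces $z \equiv 1 \pmod 2$ and $x + 2y \equiv 1 \pmod 3$ separately. Finally, writing $x = 3x'+s$ and $y = 3y'+t$ gives $x + 2y \equiv s + 2t \pmod 3$, and hence $s + 2t \equiv 1 \pmod 3$, which is the remaining claim.

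I do not anticipate a serious obstacle: the whole argument reduces to an abelianization (determinant) calculation together with the three class determinants. The only points that require care are verifying those determinant values and observing that the mod-$2$ and mod-$3$ data genuinely separate — the latter being precisely the fact that $-1$ is not a power of $\zeta$, i.e.\ that $\mu_6$ factors as $\mu_2\times\mu_3$.
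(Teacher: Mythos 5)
Your proposal is correct and is essentially the paper's own proof: both take determinants of $C = r_1\cdots r_n$, use the class determinants $\zeta$, $\zeta^2$, $-1$ from Proposition~\ref{basic facts}(5), and extract the two congruences from $\zeta^{x+2y}(-1)^z = -\zeta$. If anything, your version is marginally cleaner, since you make the $\mu_6 \cong \mu_2\times\mu_3$ decoupling explicit and skip the paper's unnecessary appeal to Lemma~\ref{ordering of classes} (the determinant is multiplicative regardless of the order of the factors).
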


\begin{proof}
    We know by Lemma \ref{ordering of classes} that given a reflection factorization of $C$ we can perform Hurwitz moves to attain a reflection factorization of the form 
    $$(a_1, \ldots, a_x, a'_1, \ldots, a'_y, b_1, \ldots, b_z)$$
    where $a_i\in \mathcal{R}_1$,  $a'_i\in \mathcal{R}_2$, and  $b_i\in \mathcal{S}$. Then, 
    $$\det(C) = \det(a_1)\cdots \det(a_x)\det(a'_1)\cdots \det(a'_y)\det(b_1) \cdots \det(b_z).$$
    The determinant of any reflection in $\mathcal{R}_1$ is $\zeta$, of any reflection in $\mathcal{R}_2$ is $\zeta^2$, and of any reflection in $\mathcal{S}$ is $-1$. Additionally, the determinant of $C$ is $-\zeta$. Thus, the above equation implies
    $$-\zeta = \zeta^x\zeta^{2y}(-1)^z.$$
    We see that we must have $z \equiv 1\pmod 2$. With this, we can then simplify the equation to
    $$\zeta = \zeta^{x+2y} \text{, or } 1 = 3x'+s + 6y'+2t.$$
    This simplifies to $3(x'+2y') = 1-(s+2t)$, or $1\equiv (s+2t)\pmod 3$, as needed. 
\end{proof}

\section{The Proof}

\subsection{Local Results}

Our goal is to find ways to take an length-$\ell$ factorization and relate it to a $\ell-1$ or $\ell-2$ length factorization. This allows us to apply the principle of induction. 

We begin by addressing patterns that arise in reflection factorizations when Hurwitz moves are applied depending on the conjugacy classes of the elements they are applied to.

\begin{defn}
Given a tuple of reflections
$(x_1, \ldots, x_n)$, if we are able to perform Hurwitz moves to get a series of $n$ identical elements, such as $(\ldots, t,t, \ldots, t, \ldots)$, we call this $n$-tuple within the tuple of reflections a \textit{perfect $n$-tuple}. In the case when $n = 2$, we have a \textit{perfect pair}, and when $n = 3$ we have a \textit{perfect triple}.
\end{defn}

We wish to show that if we have some perfect tuple, it may be replaced by a single reflection when preforming Hurwitz moves. This allows us to relate a reflection factorization with one of shorter length so that we may apply the principle of induction.

In $\mathcal{R}_1$ and $\mathcal{R}_2$, all reflections have order $3$, so finding a perfect pair $(t,t)$ in $\mathcal{R}_1$ can be replaced by its square $t^2$ in $\mathcal{R}_2$ and vice versa. In $\mathcal{S}$, all reflections have order $2$, so a perfect pair would multiply to the identity, which is not a reflection. Thus, for these reflection we want a perfect triple $(t,t,t)$ which can be replaced by the original element $t$. Using these relationships we can make the following observations.

\begin{lemma}
  \label{4 pair in 6}
  Given a reflection factorization $T$ in $G_6$ of Coxeter element $C$, if there are at least three elements from the set of reflections $\mathcal{R}'$ represented, then we are able to find a perfect pair $(t,t)$ in the Hurwitz orbit of $T$, for some $t$ in $\mathcal{R}'$.
  
\end{lemma}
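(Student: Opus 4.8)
The plan is to reduce this statement to a finite, length-independent computation by isolating a small block of reflections, and the key realization is that staying inside $G_4$ alone does not suffice.

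First I would use Proposition~\ref{ZP permutations} to rearrange $T$ into a more convenient form: by Lemma~\ref{ordering of classes} we may gather all the reflections of $\mathcal{R}'$ into a contiguous block. If two of the $\mathcal{R}_1$-reflections coincide, or two of the $\mathcal{R}_2$-reflections coincide, then after sliding them adjacent (again via Proposition~\ref{ZP permutations}) we already have a perfect pair and are done; so I would assume that the $\mathcal{R}_1$-reflections appearing in $T$ are pairwise distinct, and likewise for $\mathcal{R}_2$. Since $T$ factors the Coxeter element $C$, Proposition~\ref{mod 3 conj classes} forces the number $z$ of reflections from $\mathcal{S}$ to be odd, so at least one reflection of $\mathcal{S}$ is always available.

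Next I would isolate a bounded configuration. Using Proposition~\ref{ZP permutations} once more, I would position three of the $\mathcal{R}'$-reflections together with one reflection of $\mathcal{S}$ as a contiguous four-element block at the front of the factorization. Hurwitz moves performed only among these four positions are genuine Hurwitz moves on $T$ and never disturb the remaining reflections, so the search for a perfect pair reduces to a question about the Hurwitz orbit of a single four-tuple (three reflections from $\mathcal{R}'$, one from $\mathcal{S}$), with the unbounded length of $T$ no longer relevant. This is a finite problem, which I would settle with the Sage computations of \cite{Sage}: for every such four-tuple I would verify that its Hurwitz orbit contains an adjacent perfect pair $(t,t)$ with $t\in\mathcal{R}'$.

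The main obstacle is explaining why the auxiliary $\mathcal{S}$-reflection is genuinely needed, rather than simply working among the three $\mathcal{R}'$-reflections. Since $\mathcal{R}'$ generates $G_4$, one might hope to produce the perfect pair using Hurwitz moves inside $G_4$; this fails. The action of $G_4$-conjugation on the four reflecting hyperplanes of $G_4$ factors through $G_4/\{\pm I\}\cong A_4$, so it realizes only \emph{even} permutations of those hyperplanes, and for a degenerate triple such as an inverse pair $t,t^{-1}$ together with a third reflection one checks that no perfect pair is reachable by $G_4$-moves alone. The role of the $\mathcal{S}$-reflection, through the length-$6$ orbits of Proposition~\ref{basic facts}(7), is precisely to conjugate $\mathcal{R}'$-reflections into \emph{other} elements of $\mathcal{R}'$ via hyperplane permutations that are unavailable inside $G_4$, thereby escaping these bad orbits and creating the required perfect pair. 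Confirming that a single $\mathcal{S}$-reflection always suffices to do this across all the degenerate cases is the crux of the argument, and it is exactly what the finite Sage verification establishes.
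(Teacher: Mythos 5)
Your proposal is correct, but it reaches the lemma by a genuinely different route from the paper. The paper proves the statement by hand from the local orbit data in parts (6) and (7) of Proposition~\ref{basic facts}: after sorting via Lemma~\ref{ordering of classes}, it splits into a single-class case --- where Proposition~\ref{mod 3 conj classes} rules out exactly three elements, five or more collide outright since each of $\mathcal{R}_1,\mathcal{R}_2$ has only four members, and $n=4$ is settled because the length-$6$ orbit of $(a_4,b_1)$ introduces two new elements of the same class, which by pigeonhole must equal two of the distinct $a_1,a_2,a_3$ --- and a mixed case, where running the orbits of $(x,y_1)$ and $(z_1,b_1)$ in disjoint positions produces five values in a four-element conjugacy class, forcing some $y_i=z_j$ and hence an adjacent perfect pair. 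Your replacement of all this by one finite verification on contiguous blocks of shape $(\mathcal{R}',\mathcal{R}',\mathcal{R}',\mathcal{S})$ is sound: Proposition~\ref{ZP permutations} assembles the block, block-internal moves are honest Hurwitz moves on $T$, the condition $z\equiv 1\pmod 2$ from Proposition~\ref{mod 3 conj classes} guarantees the auxiliary $\mathcal{S}$-reflection (the paper leans on the same $b_1$ in both of its cases), and the statement your computation would certify is in fact true --- it can be deduced from the paper's own pigeonhole arguments, including the degenerate inverse-pair configurations, which the paper defuses by first pushing $y_1$ past $z_1$ before invoking Proposition~\ref{basic facts}(6). What your route buys is uniformity: a single length-independent check with no case analysis, and your block needs only three $\mathcal{R}'$-elements where the paper's single-class case consumes four distinct ones. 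What it costs is a further computer verification at a spot where the paper extracts the result by hand from local facts already certified by Sage. Finally, your $G_4/\{\pm I\}\cong A_4$ discussion is attractive motivation but does no logical work in the argument: the assertion that no perfect pair is reachable from $(t,t^{-1},u)$ by $G_4$-moves alone is left unverified, and harmlessly so, since your four-tuple search covers those configurations regardless of whether the purely $G_4$-internal approach fails.
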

\begin{proof}
  Consider a reflection factorization $T$ of a Coxeter element of $G_6$. We know by Lemma \ref{ordering of classes} that we can assume this factorization to be sorted by conjugacy class where the elements from  $\mathcal{R}'$ appear first and elements from $\mathcal{S}$ follow. In the factorization, we let the elements from $\mathcal{R}'$ be comprised of $m$ elements from $\mathcal{R}_1$ and $n$ elements from $\mathcal{R}_2$. First we consider when $m$ or $n$ is zero. That is, the elements from $\mathcal{R}'$ are in fact from a single conjugacy class, $\mathcal{R}_1$ or $\mathcal{R}_2$. 
  
  First consider $m = 0\neq n$. If $n>4$, the result holds by the pigeonhole principle, and we cannot have $n=3$, by Proposition \ref{mod 3 conj classes}. Thus we only need to check the case where $n = 4$, where the factorization is 
  \[(a_1, a_2, a_3, a_4, b_1, \ldots, b_k)\]
  where $a_i \in \mathcal{R}'$, all the $a_i$ are distinct, and $b_i \in \mathcal{S}$. If we perform Hurwitz moves on position $4$ between the initial elements $a_4$ and $b_1$, by Proposition \ref{basic facts}(7) we introduce two new elements from $\mathcal{R}_1$ which by the pigeonhole principle must be two of $a_1, a_2, a_3$, all of which are already in the tuple. Thus, we have two identical elements so we are able to form a $(t,t)$ pair. 
  
  The case $n = 0\neq m$ is identical.
  
  Now we consider both $m$ and $n$ not equal to zero. As $n+m \geq 3$, there must be at least two elements from one conjugacy class. Without loss of generality, let $n\geq 2$. Using Hurwitz moves we rearrange the tuple so that we have
  $$(\ldots, x, y_1, z_1, b_1,\ldots, b_k).$$ 
  If $y_1 = z_1$, we are done. Otherwise, consider $y_1 \neq z_1$. If $x$ and $y_1$ are inverses, we can easily remedy this  by performing a Hurwitz move on the pair $(y_1, z_1)$. So, we assume $x$ and $y_1$ are not inverses. We then perform Hurwitz moves between the pairs $(x, y_1)$ and $(z_1, b_1)$. By parts (6) and (7) of Proposition \ref{basic facts} the orbit of $(x, y_1)$ will produce one additional  element $y_2$ from conjugacy class $\mathcal{R}_2$, and the orbit of $(z_1, b_1)$ will produce two additional elements $z_2, z_3$ from conjugacy class $\mathcal{R}_2$. Therefore we have five elements, $y_1, y_2, z_1, z_2, z_3$ all in the same conjugacy class where there are four elements total. By the pigeonhole principle there is are $i,j$ such that $y_i = z_j$, giving us our $(t,t)$ pair.  
\end{proof}

\begin{lemma}
  \label{422 perfect tuple}
  Given a tuple of elements $\mathcal{S}$ of length $m \geq 3$, if $n<m$ elements of the tuple, $t_1, \ldots, t_n$, are in the same sub-conjugacy class, and if there is at least one other element $s$ in the tuple not in that sub-conjugacy class, then there exists a perfect $n$-tuple. 
\end{lemma}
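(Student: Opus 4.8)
The plan is to analyze precisely how a single Hurwitz move $\sigma$ acts on a pair of reflections drawn from $\mathcal{S}$, and then to use a fixed \emph{helper} reflection (one lying outside the relevant sub-conjugacy class) to repair the block of $n$ reflections one entry at a time. Without loss of generality take the relevant sub-conjugacy class to be $\mathcal{S}_1 = \{u, v\}$, so that each of $t_1, \dots, t_n$ equals $u$ or $v$, and fix a reflection $s$ in the tuple lying in $\mathcal{S} \setminus \mathcal{S}_1$ (which exists since $n < m$). The crux is the following two local rules, both consequences of Proposition \ref{basic facts}(2)--(4) together with Definition \ref{sub-conjugacy class}. First, if $a, b \in \mathcal{S}_1$ then $a$ and $b$ commute (they lie in a common sub-conjugacy class) and have order $2$, so $\sigma(a,b) = (b,a)$ is a plain transposition. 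Second, if $t \in \mathcal{S}_1$ and $s \notin \mathcal{S}_1$, then since $s$ lies in $\langle \mathcal{S}\rangle \cong G(4,2,2)$ the conjugate $s^{-1} t s$ again lies in the $G(4,2,2)$-conjugacy class $\mathcal{S}_1 = \{u,v\}$; and since $s$ does not commute with $t$ (Proposition \ref{basic facts}(3)) we have $s^{-1} t s \neq t$, forcing $s^{-1} t s$ to be the other element of $\mathcal{S}_1$. Thus $\sigma(t,s) = (s, t')$ and $\sigma(s,t) = (t, s')$, where $t'$ denotes the flip of $t$ within $\mathcal{S}_1$ and $s'$ the flip of $s$ within its own sub-conjugacy class; in particular a helper remains a helper after every such move.

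Next I would gather the $n$ reflections of $\mathcal{S}_1$ into consecutive positions. Using the second rule, any reflection of $\mathcal{S}_1$ can be pushed past an adjacent reflection of $\mathcal{S}\setminus\mathcal{S}_1$ (it survives the move, merely flipped within $\mathcal{S}_1$), and using the first rule it can be swapped past another $\mathcal{S}_1$ reflection. Hence Hurwitz moves let me slide all $\mathcal{S}_1$ reflections together and park at least one helper immediately to the right of the resulting block, reaching a configuration $(t_1, \dots, t_n, s, \dots)$ with $t_1, \dots, t_n \in \mathcal{S}_1$ and $s \in \mathcal{S}\setminus\mathcal{S}_1$. Note that Proposition \ref{ZP permutations} and Lemma \ref{ordering of classes} only rearrange conjugacy classes, not sub-conjugacy classes, so this gathering must be carried out by hand using the two rules above.

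The heart of the argument is then a \emph{helper sweep}. Starting from $(t_1, \dots, t_n, s)$, repeatedly applying the second rule to move $s$ leftward flips every reflection it crosses, so after sweeping $s$ from the right end to the gap before position $k$ I obtain $(t_1, \dots, t_{k-1}, s, t_k', \dots, t_n')$; sweeping $s$ back to the right then leaves those flipped entries unchanged (the second rule flips only the helper when it moves rightward) and restores a helper at the far right. The net effect is an operation that flips exactly the suffix $t_k, \dots, t_n$ of the block while returning a helper to the end. Composing the suffix-flip at position $k$ with the one at position $k+1$ flips the single entry $t_k$ alone, so I can flip any individual $t_k$, and therefore any chosen subset of the block, always with a helper waiting at the right. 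Flipping every entry currently equal to $v$ then produces $(u, u, \dots, u, \ast)$, a perfect $n$-tuple.

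I expect the main obstacle to be establishing the second local rule cleanly, i.e.\ that conjugating a reflection of $\mathcal{S}_1$ by an outside reflection produces exactly the \emph{other} element of $\mathcal{S}_1$ rather than some unrelated reflection of $\mathcal{S}$. This is where the two special features of $G_6$ are essential: that $\langle\mathcal{S}\rangle \cong G(4,2,2)$, so conjugation preserves sub-conjugacy classes, and that each sub-conjugacy class has exactly two elements, whereupon the non-commuting condition of Proposition \ref{basic facts}(3) pins down the image uniquely. Once this flip dichotomy is in hand, the gathering and the sweep are routine bookkeeping.
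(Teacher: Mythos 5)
Your proposal is correct and in essence takes the same route as the paper: both arguments hinge on the same flip dichotomy --- conjugating a member of a two-element sub-conjugacy class by a non-commuting reflection of $\mathcal{S}$ (Proposition~\ref{basic facts}(3), with conjugation preserving sub-conjugacy classes inside $\langle\mathcal{S}\rangle \cong G(4,2,2)$) must yield the \emph{other} member --- and both use a helper reflection outside the class to equalize the $t_i$ while the helper survives each move. The only difference is bookkeeping: the paper grows the constant run one element at a time ($t_1,t_2$ first, then introducing $t_3$, etc.), whereas you gather the block first and then flip arbitrary subsets via helper sweeps; this is a reorganization of the same moves, not a different argument.
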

\begin{proof}
  First, we just consider $t_1, t_2$, and $s$.
  
  If $t_1 = t_2$ we can simply rearrange the tuple using Hurwitz moves so that $t_1$ and $t_2$ are next to each other, giving us a perfect pair.
  
  Otherwise, we consider $t_1 \neq t_2$. As $s$ is not in the same conjugacy class as $t_1$ and $t_2$, we can rearrange the tuple so that we have  
  $$(\ldots, t_1, s', t_2, \ldots)$$
  where $s'$ is the element $s$ after being acted on by the appropriate Hurwtiz moves. We can then perform a Hurwitz move between $s'$ and $t_1$. By Proposition \ref{basic facts}(3), the sub-conjugacy classes of $\mathcal{S}$ have two elements which only commute with each other, so this gives us the tuple $$(\ldots, s', t_2, t_2, \ldots)$$
  with a perfect pair $(t_2, t_2)$. If we wish to have the pair $(t_1, t_1)$, we can take this factorization, move $s'$ to be the rightmost of the three, where it will be conjugated to some element $s''$, and then move it back to the leftmost position, so we have 
  $$(\ldots, s'', t_1, t_1, \ldots)$$
  giving us the perfect pair $(t_1, t_1)$. Thus, given two elements $t_1$ and $t_2$, we can not only find a perfect pair, but we can choose whether it is a pair of $t_1$ or $t_2$. 
  
  We can rewrite either result as $(\ldots, r, t, t, \ldots)$. We then introduce the element $t_3$ and rearrange the tuple using Hurwitz moves to get 
  $$(\ldots, t, t, r', t_3, \ldots).$$
  We apply the same process as before to get 
  $$(\ldots, r'', t, t, t, \ldots).$$
  We can continue introduce all elements of $t_1, \ldots, t_n$ one by one until we have
  $$(\ldots, q, \underbrace{t, t, \ldots, t}_{n}, \ldots)$$
  where $q\in\mathcal{S}$, giving us a perfect $n$-tuple, as needed.
\end{proof}

\begin{corollary}
\label{422 n+1 tuple}
    Let $n$ be a positive integer. For any tuple consisting of elements of $\mathcal{S}$ with length $3n +1$, we are able to find a perfect $(n+1)$-tuple. 
\end{corollary}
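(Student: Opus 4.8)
The plan is to use the pigeonhole principle to force $n+1$ of the $3n+1$ reflections into a single sub-conjugacy class, and then to invoke Lemma~\ref{422 perfect tuple} to fuse those into a perfect $(n+1)$-tuple. By Proposition~\ref{basic facts}(3) the set $\mathcal{S}$ splits into the three sub-conjugacy classes $\mathcal{S}_1,\mathcal{S}_2,\mathcal{S}_3$, so distributing $3n+1$ reflections among three classes forces at least $\lceil (3n+1)/3\rceil = n+1$ of them to lie in one common sub-conjugacy class, say $\mathcal{S}_i$.

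First I would treat the generic case, in which the tuple also contains at least one reflection $s$ lying outside $\mathcal{S}_i$. Here I select exactly $n+1$ of the reflections lying in $\mathcal{S}_i$; since $n+1 < 3n+1$ for every $n\ge 1$, the hypotheses of Lemma~\ref{422 perfect tuple} are met (its count of same-class elements being taken as $n+1$ and its total length as $3n+1$; note the mild clash with that lemma's own use of the letters $n$ and $m$), and it yields the desired perfect $(n+1)$-tuple at once.

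The remaining case, in which every reflection of the tuple lies in the single sub-conjugacy class $\mathcal{S}_i$, is the one Lemma~\ref{422 perfect tuple} cannot reach directly, because it requires an element outside the sub-conjugacy class; this is the main subtlety to address. I would dispose of it by recalling from Proposition~\ref{basic facts}(3) that $\mathcal{S}_i$ consists of exactly two reflections that commute with each other, so every Hurwitz move performed within this tuple acts as a plain transposition (indeed $r_{i+1}^{-1} r_i r_{i+1} = r_i$ whenever $r_i$ and $r_{i+1}$ commute). Hence the entire Hurwitz orbit consists of all rearrangements of the tuple. A second pigeonhole count, now over the two distinct reflections of $\mathcal{S}_i$, shows that one of them occurs at least $\lceil (3n+1)/2\rceil \ge n+1$ times, and sliding $n+1$ equal copies next to one another produces a perfect $(n+1)$-tuple.

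In short, all of the substantive Hurwitz-theoretic content is already packaged in Lemma~\ref{422 perfect tuple}, so the only genuine work is the bookkeeping of the two pigeonhole inequalities, $\lceil (3n+1)/3\rceil = n+1$ and $\lceil (3n+1)/2\rceil \ge n+1$, together with the observation that the degenerate ``all in one sub-conjugacy class'' configuration is resolved purely by the commutativity within $\mathcal{S}_i$.
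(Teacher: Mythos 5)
Your proposal follows essentially the same route as the paper: pigeonhole the $3n+1$ reflections over the three sub-conjugacy classes $\mathcal{S}_1,\mathcal{S}_2,\mathcal{S}_3$ to force at least $n+1$ into one class, then invoke Lemma~\ref{422 perfect tuple} (the paper phrases the pigeonhole as the worst-case distribution $n$, $n$, $n+1$). The one genuine difference is that you explicitly handle the degenerate configuration in which all $3n+1$ reflections lie in a single sub-conjugacy class $\mathcal{S}_i$ --- a case the paper's proof silently passes over, even though Lemma~\ref{422 perfect tuple} cannot be applied there, since its hypotheses require both an element outside the class and strictly fewer same-class elements than the tuple's length. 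Your fix is correct: by Proposition~\ref{basic facts}(3) the two reflections of $\mathcal{S}_i$ commute, so every Hurwitz move in such a tuple is a plain transposition, and pigeonholing over the two reflections yields at least $\lceil (3n+1)/2\rceil \ge n+1$ equal copies that can be slid together. So your write-up is not just equivalent to the paper's argument but actually patches a small gap in it.
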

\begin{proof}
  When $n = 0$, there is only one element and the result is trivial. Consider $n\geq 1$. In a tuple of $3n+1$ elements, all of which are from one of three sub-conjugacy classes, we have at worst case, without loss of generality $n$ elements in $\mathcal{S}_1$, $n$ elements in $\mathcal{S}_2$, and $n+1$ elements in $\mathcal{S}_3$. By Lemma \ref{422 perfect tuple}, were are able to produce a perfect $(n+1)$-tuple using Hurwitz moves.
\end{proof}

In our proof of Theorem \ref{THE theorem}, to show that any two reflection factorizations of $C$ are in the same orbit, we take a canonical factorization from each orbit and show that all factorizations in that orbit can reach this canonical factorization through Hurwtiz moves. This canonical form is defined by the following.

\begin{defn}
We say that a reflection factorization of the form
$$(\underbrace{A, \ldots, A}_{n}, \underbrace{A^{-1}, \ldots, A^{-1}}_{m}, \underbrace{B, \ldots, B}_{k}).$$
is a \textit{standard factorization} and we denote it by $[n,m,k]$. 

\end{defn}

Given a reflection factorization of $C$ that has $n$ elements in the set $\mathcal{R}_1$, $m$ elements in the set $\mathcal{R}_2$, and $k$ elements in the set $\mathcal{S}$, we wish to show that it is in the same Hurwitz orbit as the standard factorization $[n,m,k]$.

\subsection{Marked Factorizations}

\begin{defn}
[{\cite[Defn.\ 4.17]{Zach}}]
\label{marked element}
A \textit{marked element} in a reflection factorization is an element $t$ that has been marked, denoted as $t^*$. A \textit{marked factorization} $\hat{T}$ is a factorization which contains a marked element.
\end{defn}

Now we need to be able to apply Hurwitz moves on this marked element, so we define a how Hurwitz moves would function on these elements.

\begin{defn}
[{\cite[Defn.\ 4.18]{Zach}}]
A \textit{marked Hurwitz move}, $\sigma^*$, is defined as follows for marked reflection factorizations:
\begin{equation}
    (\ldots, t_i, t_{i+1},\ldots) \xrightarrow{\sigma_i^*} (\ldots, t_{i+1}, t_{i+1}^{-1}\cdot t_i \cdot t_{i+1}, \ldots)
\end{equation}
\begin{equation}
    (\ldots, t_i, t_{i+1}^*) \xrightarrow{\sigma_i^*} (\ldots, t_{i+1}^*, t_{i+1}^{-1}\cdot t_i \cdot t_{i+1},\ldots)
\end{equation}
\begin{equation}
    (\ldots, t_i^*, t_{i+1}, \ldots) \xrightarrow{\sigma_i^*} (\ldots, t_{i+1}, (t_{i+1}^{-1}\cdot t_i \cdot t_{i+1})^*, \ldots)
\end{equation}
\end{defn}

So, a marked Hurwitz move is identical to a Hurwitz move, while also shifting the position of the marking $^*$. Thus, all previously made statements about Hurwitz moves are also true for the marked Hurwitz move.

In our inductive proof of the theorem, we do one of the following. 
\begin{enumerate}
    \item Take a perfect pair $(t,t)$ in $\mathcal{R}_1$  and replace it with the marked element $(t^2)^*$.
    \item  Take a perfect pair $(t,t)$ in $\mathcal{R}_2$  and replace it with the marked element $(t^2)^*$.
    \item Take a perfect triple $(t,t,t)$ in $\mathcal{S}$  and replace it with the marked element $(t^3)^* = t^*$.
\end{enumerate}

 By Lemma \ref{4 pair in 6} and Corollary \ref{422 n+1 tuple} this will be possible whenever our original factorization has at least $3$ reflections in $\mathcal{R}'$ or $7$ reflection in $\mathcal{S}$. By Proposition \ref{mod 3 conj classes}, we can see that we are guaranteed one of these cases for all factorization of length $8$ and greater. For factorization of length $7$ or less, we check the theorem using Sage, and these factorizations will function as our base case. 

\begin{proposition}
\label{7 base case}
  The conjecture is true for reflection factorizations up to and including length $7$.    
\end{proposition}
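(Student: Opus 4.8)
The plan is to verify the statement by a direct finite computation, exploiting the fact that $G_6$ has only $48$ elements and $14$ reflections, so for each fixed length there are only finitely many reflection factorizations of the Coxeter element $C$. As noted after the definition of a Hurwitz move, applying $\sigma_i$ never changes the multiset of conjugacy classes of a factorization, so the ``only if'' direction of the biconditional is immediate and holds for factorizations of every length. The entire content of the base case is therefore the ``if'' direction: for each length $n\le 7$, any two reflection factorizations of $C$ with the same multiset of conjugacy classes lie in a common Hurwitz orbit. Since $G_6$ has rank $2$, the shortest factorizations have length $2$ (indeed $C=AB$ with $A\in\mathcal{R}_1$, $B\in\mathcal{S}$), so the lists for $n<2$ are empty and contribute nothing. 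By \cite[Prop.~1.4]{RRS}, it suffices to treat the single Coxeter element $C$ fixed above.

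For each $n$ from $2$ up to $7$, I would enumerate all length-$n$ reflection factorizations of $C$ as follows: range over all $14^{\,n-1}$ choices of the first $n-1$ reflections $(r_1,\dots,r_{n-1})$, set $r_n := (r_1\cdots r_{n-1})^{-1}C$, and keep the tuple exactly when $r_n\in\mathcal{R}$. This produces the complete list of factorizations of length $n$. I would then compute the partition of this list into Hurwitz orbits by treating each factorization as a vertex, joining $F$ to $\sigma_i(F)$ for every valid position $i$ with $1\le i\le n-1$, and extracting connected components (by breadth-first search or union--find). Finally, to each orbit I would attach its multiset of conjugacy classes---which is well defined, being a Hurwitz invariant---and verify that the resulting map from orbits to multisets is injective; equivalently, that the number of Hurwitz orbits equals the number of distinct multisets $\{\mathcal{R}_1^{x},\mathcal{R}_2^{y},\mathcal{S}^{z}\}$ with $x+y+z=n$ that actually occur. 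This injectivity is precisely the ``if'' direction, and combined with the automatic ``only if'' direction it gives the biconditional for length $n$.

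The only real cost is computational scale rather than any mathematical subtlety: the largest case, $n=7$, requires screening roughly $14^{6}\approx 7.5$ million candidate tuples, which is well within reach on Sage \cite{Sage}, and the subsequent orbit computation is cheap since each Hurwitz move is a single group multiplication and every orbit is finite. The main point to be careful about is completeness---ensuring the enumeration captures every factorization and that the adjacency used in the component computation records moves at all admissible positions---after which the verification reduces, for each $n$, to comparing two integers (the orbit count and the count of occurring multisets). Proposition~\ref{mod 3 conj classes} serves as an independent check here, since it predicts exactly which triples $(x,y,z)$ can arise and hence how many distinct multisets the computation should produce.
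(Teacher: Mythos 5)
Your proposal is correct and is essentially the paper's own argument: the paper likewise proves the base cases by exhaustively verifying $2\le \ell\le 7$ with a finite Sage computation. The only difference is bookkeeping---rather than partitioning the full list of factorizations into connected components and checking injectivity of the orbit-to-multiset map, the paper computes the Hurwitz orbit of each standard factorization $[n,m,k]$ permitted by Proposition~\ref{mod 3 conj classes} and checks that the sum of these orbit sizes equals the total number of length-$\ell$ reflection factorizations of $C$, which is an equivalent verification.
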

\begin{proof}As the number of elements here is finite, we are able to perform a finite number of calculations on Sage to prove the statement. Given a length $\ell$ factorization, we check with Sage how many possible reflection factorizations there are in total. Then, we consider the possible standard forms $[n,m,k]$, as limited by Proposition \ref{mod 3 conj classes}. Using Sage, we then computed the length of the orbits of each possible standard form. In all cases, the sum of these sizes is equal to the total number of possible reflection factorizations, so the conjecture holds for length $\ell$. We verified this fact for $2\leq \ell\leq 7$. 
\end{proof}

To prove our theorem inductively, we must show that performing Hurwitz moves at length $n-k$ factorizations have the same results as performing certain Hurwitz moves on the appropriate  length $n$ factorization.

\begin{lemma}
  \label{moving marked elements}
  Let $T$ be a reflection factorization $T = (\ldots, t, t, \ldots, t, \ldots)$ with a perfect $n$-tuple $(t,t,\ldots,t)$, and let $\hat{T}$ be the marked factorization resulting from letting  $(t,t,\ldots,t) = (t^n)^*$, so that we have $\hat{T} = (\ldots, (t^n)^*, \ldots)$. 
  
  Suppose that the marked factorization $\hat{S}$ is obtained from $\hat{T}$ by performing some Hurwitz moves, and that $\hat{S}$ has marked element $(s^n)^*$. Suppose further that this marked element has a unique expansion 
  \[s^n =\underbrace{s\cdot \cdots \cdots s}_{n}\]
   and let $S$ be the factorization that we get by replacing $(s^n)^*$ with the $n$-tuple $(s, \ldots, s)$. Then, there is a series of Hurwitz moves that can be performed on $T$ to obtain the factorization $S$. 
  
\end{lemma}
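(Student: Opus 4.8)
The plan is to induct on the number of marked Hurwitz moves used to pass from $\hat{T}$ to $\hat{S}$, simulating each marked move by a short sequence of ordinary Hurwitz moves on the \emph{unpacked} factorization. Throughout, I will track the base reflection of the marked element: since the marked element begins as $(t^n)^*$ and every marked Hurwitz move either fixes the marked element or conjugates it by a reflection, the marked group element is at every stage of the form $(u^n)^*$ for a reflection $u$ conjugate to $t$, and $u$ is pinned down by recording the successive conjugations. Writing $\hat{T}=\hat{T}_0\to\hat{T}_1\to\cdots\to\hat{T}_k=\hat{S}$ for the given sequence of marked moves, with tracked base reflections $u_0=t,u_1,\ldots,u_k$, I define $T_j$ to be the ordinary factorization obtained from $\hat{T}_j$ by replacing its marked element $(u_j^n)^*$ with the contiguous $n$-tuple $(u_j,\ldots,u_j)$. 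Then $T_0=T$, and the goal becomes to show that each $T_j$ is carried to $T_{j+1}$ by ordinary Hurwitz moves.

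The heart of the argument is the inductive step, which is a case analysis on the single marked move $\hat{T}_j\to\hat{T}_{j+1}$. If the move does not involve the marked position, it is an ordinary Hurwitz move on two reflections lying outside the block $(u_j,\ldots,u_j)$, and the identical move applied to $T_j$ yields $T_{j+1}$, with $u_{j+1}=u_j$ and the block untouched. If the move does involve the marked element, I mirror it by sliding the adjacent reflection across the entire block of $n$ copies. Concretely, when the marked element sits on the left and is moved past a neighbor $b$ on its right (case (3)), sliding $b$ leftward across the $n$ copies of $u_j$ by $n$ successive Hurwitz moves conjugates each copy to $u_{j+1}=b^{-1}u_j b$ and leaves $b$ in front, so the block becomes $(u_{j+1},\ldots,u_{j+1})$, matching $(b^{-1}u_j^n b)^*=(u_{j+1}^n)^*$. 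Symmetrically, when the marked element sits on the right and a neighbor $a$ on its left is moved past it (case (2)), sliding $a$ rightward across the block conjugates $a$ to $u_j^{-n}a\,u_j^n$ and leaves the block unchanged, with $u_{j+1}=u_j$. In every case the block stays contiguous and uniformly labeled, so $T_{j+1}$ is exactly the unpacking of $\hat{T}_{j+1}$ along the tracked base reflection, reached from $T_j$ by at most $n$ ordinary Hurwitz moves.

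Chaining these steps produces a sequence of ordinary Hurwitz moves carrying $T=T_0$ to $T_k$. It then remains to identify $T_k$ with the factorization $S$ of the statement. By construction the marked element of $\hat{S}=\hat{T}_k$ is the group element $u_k^n$, and $T_k$ is its unpacking $(u_k,\ldots,u_k)$; the factorization $S$ is the unpacking $(s,\ldots,s)$ of the same marked element, so $s^n=u_k^n$ with both $s$ and $u_k$ reflections. Here the uniqueness hypothesis enters: since $(s^n)^*$ admits a \emph{unique} expansion $s^n=s\cdots s$ as a product of $n$ equal reflections, we must have $s=u_k$, whence $S=T_k$. This completes the reduction.

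I expect the main obstacle to be the bookkeeping in the inductive step, namely verifying precisely that sliding a neighbor across the block of $n$ identical reflections reproduces the conjugation prescribed by the marked Hurwitz rules (2) and (3) and that the block remains contiguous and uniformly labeled by $u_j$ at each stage. The role of the uniqueness hypothesis is narrow but essential: it is invoked only at the very end, to guarantee that the factorization $S$ named in the statement coincides with the specific unpacking $T_k$ produced by the simulation, since a priori the marked group element could admit more than one such expansion.
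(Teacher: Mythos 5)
Your proposal is correct and follows essentially the same route as the paper: reduce by induction to a single marked Hurwitz move, then simulate the two cases (marked element on the left or on the right of its neighbor) by sliding the neighbor across the block of $n$ identical copies using $n$ ordinary Hurwitz moves. Your explicit tracking of the base reflection $u_j$ at every stage, with the uniqueness hypothesis invoked only at the final identification $S = T_k$, is a slightly more careful rendering of a point the paper leaves implicit, but the argument is the same.
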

\begin{proof}
  By induction, it suffices to prove the case where $\hat{T}$ is related to $\hat{S}$ by a single Hurwitz move $\sigma_i$. If this Hurwitz move does not involve the marked element, then the effect of performing a Hurwitz move in $\hat{T}$ is identical to performing a Hurwitz move in $T$ and so the result follows immediately in this case. Otherwise, we first consider the following move $$
 \hat{T} = (\ldots ,s, (t^n)^*, \ldots) \xrightarrow{\sigma_i^*} (\ldots ,(t^n)^*, t^{-n}\cdot s\cdot t^n, \ldots)= : \hat{S}.
 $$
  Thus, $T = (\ldots, s, t,\ldots, t, \ldots)$ and $S = (\ldots, t, \ldots, t, t^{-n}\cdot s\cdot t^n, \ldots)$. We can then perform the following Hurwitz moves on $T$: 
  \begin{equation*}
      \begin{split}
          T = (\ldots, s, t, \ldots, t, \ldots) & \xrightarrow{\sigma_i} (\ldots, t, t^{-1}\cdot s\cdot t, t, \ldots, t, \ldots)\\
                                                & \xrightarrow{\sigma_{i+1}} (\ldots, t, t, t^{-2}\cdot s\cdot t^2, t, \ldots, t, \ldots) \\
                                                & \vdots \\
                                                & \xrightarrow{\sigma_{i+n-1}} (\ldots, t, \ldots, t, t^{-n}\cdot s\cdot t^n, \ldots) = S.
      \end{split}
  \end{equation*}
  as needed. We now consider the move 
  $$\hat{T} = (\ldots , (t^n)^*,s,  \ldots)  \xrightarrow{\sigma_i^*} (\ldots ,s, (s^{-1}\cdot t^n \cdot s\cdot)^* , \ldots):= \hat{S}.$$
  Thus, $T = (\ldots, t, \ldots, t, s, \ldots)$ and $S = (\ldots, s, s^{-1}\cdot t \cdot s,\ldots , s^{-1}\cdot t \cdot s, \ldots)$. We can then perform the following Hurwitz moves on $T$:
  \begin{equation*}
      \begin{split}
          T = (\ldots, t, \ldots, t, s, \ldots) & \xrightarrow{\sigma_{i+n}} (\ldots, t, \ldots, t, s, s^{-1}\cdot t \cdot s,\ldots)\\
                                                & \xrightarrow{\sigma_{i+n-1}} (\ldots, t, \ldots, t, s, s^{-1}\cdot t \cdot s, s^{-1}\cdot t \cdot s,\ldots)\\
                                                & \vdots\\
                                                & \xrightarrow{\sigma_{i+1}} (\ldots, s, s^{-1}\cdot t\cdot s, \ldots, s^{-1}\cdot t\cdot s, \ldots),
      \end{split}
  \end{equation*} 
  as needed.
\end{proof}

\subsection{Proof of the Theorem}

We now have all the facts need to prove our main theorem by induction on the length of the reflection factorizations of $C$. We restate it here for convenience. 

\begin{recap}
Let $T$ and $T'$ be two length $n$ reflection factorizations of a Coxeter element of complex reflection group $G_6$. Then, $T$ and $T'$ are in the same Hurwitz orbit if and only if they have the same multiset of conjugacy classes.
\end{recap}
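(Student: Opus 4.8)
The plan is to prove both directions by first disposing of the easy implication and then reducing the hard one to a clean statement about standard factorizations. The forward direction is immediate: a Hurwitz move replaces a reflection by a conjugate reflection, so it preserves the number of factors lying in each of $\mathcal{R}_1$, $\mathcal{R}_2$, and $\mathcal{S}$, and hence the multiset of conjugacy classes is a Hurwitz invariant. For the reverse direction, since the multiset of conjugacy classes records exactly the triple $(n,m,k)$ counting the factors in $\mathcal{R}_1$, $\mathcal{R}_2$, and $\mathcal{S}$, it suffices to show that every length-$(n+m+k)$ factorization of $C$ of this type lies in the Hurwitz orbit of the single standard factorization $[n,m,k]$; two factorizations with the same multiset then both reach the same standard form and so share an orbit. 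I would prove this by induction on the length $\ell = n+m+k$, taking Proposition \ref{7 base case} as the base cases $\ell \le 7$.

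For the inductive step assume $\ell \ge 8$. By Proposition \ref{mod 3 conj classes} the count $z = k$ is odd, and a short check shows that $\mathcal{R}' = \mathcal{R}_1 \cup \mathcal{R}_2$ cannot have at most two elements while $\mathcal{S}$ has at most six (that would force $\ell \le 7$), so either at least three factors lie in $\mathcal{R}'$ or at least seven lie in $\mathcal{S}$. In the first case Lemma \ref{4 pair in 6} produces, in the Hurwitz orbit of $T$, a perfect pair $(t,t)$ with $t \in \mathcal{R}'$; collapsing it to the marked element $(t^2)^*$ gives a marked factorization $\hat{T}$ of length $\ell - 1$ whose underlying factorization has type $(n-2, m+1, k)$ or $(n+1, m-2, k)$ according as $t \in \mathcal{R}_1$ or $t \in \mathcal{R}_2$. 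In the second case Corollary \ref{422 n+1 tuple} (with the parameter equal to $2$) produces a perfect triple $(t,t,t)$ with $t \in \mathcal{S}$; collapsing it to $t^* = (t^3)^*$ gives a marked $\hat{T}$ of length $\ell - 2$ of underlying type $(n, m, k-2)$. In every case the underlying length is strictly smaller, so the inductive hypothesis applies to it.

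Next I would run the induction through the marked formalism. The inductive hypothesis moves the underlying factorization of $\hat{T}$ to the standard factorization of its type, and carrying the mark along these moves yields a marked standard factorization in which the marked reflection has become $A^{-1}$, $A$, or $B$ respectively (its conjugacy class being a Hurwitz invariant). Each such value has a unique expansion back into the collapsed tuple, namely $A^{-1} = A \cdot A$, $A = A^{-1} \cdot A^{-1}$, and $B = B \cdot B \cdot B$, using that squaring is a bijection on a cyclic group of order $3$ and that $B$ has order $2$. Sliding the mark through the block of identical reflections (a Hurwitz move on two equal reflections fixes them while transporting the mark) places it at the boundary of its block: the first $A^{-1}$, the last $A$, or any $B$; expanding there gives precisely $[n,m,k]$. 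Finally Lemma \ref{moving marked elements}, whose unique-expansion hypothesis I have just verified, lifts this entire sequence of marked moves to a sequence of genuine Hurwitz moves carrying $T$ itself to $[n,m,k]$, completing the induction.

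The main obstacle is the bookkeeping that couples the three ingredients: finding a collapsible tuple, invoking the inductive hypothesis on the shorter marked factorization, and lifting via Lemma \ref{moving marked elements}. The delicate points are ensuring that after reaching the shorter standard form the mark can be transported to the correct block boundary, so that its expansion is exactly the longer standard form rather than one with two stray factors in the interior of a block, and verifying the unique-expansion hypothesis that makes the lift well defined. Establishing that length $\ell \ge 8$ always exposes either a perfect pair in $\mathcal{R}'$ or a perfect triple in $\mathcal{S}$, through the determinant and parity constraints of Proposition \ref{mod 3 conj classes}, is what guarantees that the induction never stalls before reaching the Sage-verified base cases.
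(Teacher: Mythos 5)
Your proposal is correct and follows essentially the same route as the paper's proof: the same induction on length with the Sage-verified base cases $\ell \le 7$, the same case split (at least three factors in $\mathcal{R}'$ versus at least seven in $\mathcal{S}$) justified by Proposition~\ref{mod 3 conj classes}, collapsing a perfect pair or triple via Lemma~\ref{4 pair in 6} or Corollary~\ref{422 n+1 tuple} to a marked element, and lifting back through Lemma~\ref{moving marked elements}. The only difference is cosmetic: you transport the mark to the block boundary before expanding, whereas the paper expands in place and then slides the resulting $(A,A)$ or $(A^{-1},A^{-1})$ pair to its block using that $A$ and $A^{-1}$ commute under Hurwitz moves.
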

\begin{proof}

We know that Hurwitz moves preserve conjugacy class, so the forward direction holds. 

Now we consider the backward direction which we prove by induction on the length $p$ of the reflection factorizations $T$ and $T'$. We have our base cases where $p\leq 7$ checked in Proposition \ref{7 base case}. 
Assume that the theorem holds for all factorizations of length $\ell$ or less.  Consider a factorization $T$ of length $\ell + 1$. We wish to show that $T$ is in the same Hurwitz orbit as the standard factorization with the same multiset of conjugacy classes, $[n,m,k]$.

If $T$ has $7$ or more elements from the conjugacy class $\mathcal{S}$, then by Proposition \ref{422 n+1 tuple} we are able to find a perfect triple $(t,t,t)$ in the Hurwitz orbit of $T$. From this perfect triple we are able to create the marked element $(t^3)^* = t^*$ giving us the marked factorization $\hat{T}$. As $\hat{T}$ is of length $\ell-1$, by the inductive hypothesis it is in the same Hurwitz orbit as the standard factorization $[n,m,k-2]$. Thus, the marked element here is the element $B^*$, which when replaced with $(B,B,B)$ gives us the standard factorization $[n,m,k]$. By Lemma \ref{moving marked elements} this means that the standard factorization is in the same Hurwitz orbit as $T$. 

If $T$ has fewer than $7$ elements from $\mathcal{S}$, then by Proposition \ref{mod 3 conj classes} there must be at least $3$ elements from $\mathcal{R}'$ as we are considering $\ell\geq 7$. By Lemma \ref{4 pair in 6} we know that we are able to find a perfect pair $(t^2,t^2)$ in a factorization in the Hurwitz orbit of $T$ so without loss of generality, we may assume that such a pair already exists in $T$. From this perfect pair we are able to create the marked element $(t^4)^* = t^*$ giving us the marked factorization $\hat{T}$. As $\hat{T}$ has length $\ell$, by the inductive hypothesis it is in the same Hurwitz orbit as the standard factorization with the corresponding multiset of conjugacy classes. The marked element would then either be the element $(A^{-1})^*$ or $A^*$. If we have $(A^{-1})^*$,
we have 
\[(\underbrace{A, \ldots, A,}_{n-2} \underbrace{A^{-1}, \ldots, (A^{-1})^*, \ldots, A^{-1},}_{m+1} \underbrace{B, \ldots, B}_{k}).\]
We expand the marked element to $(A,A)$ 
\[(\underbrace{A, \ldots, A,}_{n-2} \underbrace{A^{-1}, \ldots, A,A, \ldots, A^{-1},}_{m+2} \underbrace{B, \ldots, B}_{k})),\]
where these two elements can be easily moved to the $\mathcal{R}_1$ section of the factorization as $A$ and $A^{-1}$ commute under Hurwitz moves. . For the same reasons, if we have $A^*$, we have
\[(\underbrace{A, \ldots,A^*, \ldots, A,}_{n+1} \underbrace{A^{-1}, \ldots, A^{-1}}_{m-2}, \underbrace{B, \ldots, B}_{k}).\]
We expand the marked element to to $(A^{-1},A^{-1})$
\[(\underbrace{A, \ldots,A^{-1}, A^{-1}, \ldots, A,}_{n+2} \underbrace{A^{-1}, \ldots, A^{-1}}_{m-2}, \underbrace{B, \ldots, B}_{k}),\]
 which can be easily moved to the $\mathcal{R}_2$ section of the factorization if it is not already. These expansions give us a $[n,m,k]$ with length $\ell+1$. By Lemma \ref{moving marked elements} this means that the standard factorization is in the same Hurwitz orbit as $T$. As $T$ is arbitrary, and $T'$ has the same multiset of conjugacy classes as $T$, then $T'$ is also in the same Hurwitz orbit as $[n,m,k]$. Thus, $T$ and $T'$ are in the same Hurwitz orbit, as needed. 
\end{proof}

\end{document}